\theoremstyle{plain}
\newtheorem{theorem}{Theorem}[section]
\newtheorem{prop}[theorem]{Proposition}
\newtheorem{lemma}{Lemma}[section]
\newtheorem{corol}{Corollary}[theorem]
\theoremstyle{definition}
\theoremstyle{remark}
\numberwithin{equation}{section}
\title{Resolution of the equation $(3^{x_1}-1)(3^{x_2}-1)=(5^{y_1}-1)(5^{y_2}-1)$}
\thanks{This work has been made in the frame of the ``{\sc Efop}-3.6.1-16-2016-00018 - Improving the role of the research $+$ development $+$ innovation in the higher education through institutional developments assisting intelligent specialization in Sopron and Szombathely''.}
\author{K\'alm\'an Liptai}
   \address{Institute of Mathematics and Informatics, Eszterh\'azy K\'aroly University, Eger, Hungary}
   \email{liptai.kalman@uni-eszterhazy.hu}
\author{L\'aszl\'o N\'emeth}
   \address{Institute of Mathematics, University of Sopron, Sopron, Hungary}
   \email{nemeth.laszlo@uni-sopron.hu}
\author{G\"okhan Soydan}
   \address{Department of Mathematics, Bursa Uluda\u{g} University, Bursa, Turkey}
   \email{gsoydan@uludag.edu.tr}
\author{L\'aszl\'o Szalay}
   \address{Department of Mathematics and Informatics, Jan Selye University, Kom\'arno, Slovakia}
   \email{szalay.laszlo@uni-sopron.hu}
   \thanks{Supported by Hungarian National Foundation for Scientific Research Grant No.~128088.}
 \date{}
  \keywords{Exponential diophantine equation, linear recurrence, Baker method.}
  \subjclass{11D61, 11B37}
\begin{document}

  \begin{abstract}
 Consider the diophantine equation $(3^{x_1}-1)(3^{x_2}-1)=(5^{y_1}-1)(5^{y_2}-1)$ in positive integers $x_1\le x_2$, and $y_1\le y_2$. Each side of the equation is a product of two terms of a given binary recurrence, respectively. In this paper, we prove that the only solution to the title equation is $(x_1,x_2,y_1,y_2)=(1,2,1,1)$. The main novelty of our result is that we allow products of two terms on both sides.
  \end{abstract}

\maketitle

\section{Introduction}
This paper is devoted to investigation of positive integers having a specific but analogous structure of digits in two distinct integer bases. We  worked out the details only for the bases 3 and 5, but our approach and arguments should work for other bases, as well. More precisely, we determine the solutions to the diophantine equation
\begin{equation}\label{main}
(3^{x_1}-1)(3^{x_2}-1)=(5^{y_1}-1)(5^{y_2}-1)
\end{equation}
in positive integers $x_1\le x_2$ and $y_1\le y_2$.

Senge and Strauss \cite{SS} proved that the number of integers for which the sum of digits simultaneously in base $a$ and $b$ do not exceed a given bound is finite if and only if $(\log a)/(\log b)$ not rational. Their method is not effective, and it motivated Stewart \cite{S} to exhibit a lower bound for the sum of the digits in base $a$ and $b$. 
To be precise he proved the following theorem. Assume that $a,b,n\in\mathbb{N}\setminus\{0,1\}$, $\alpha,\beta\in\mathbb{N}$ with $\alpha<a$ and $\beta<b$. If $N(\alpha,a)$ denotes the number of digits different from $\alpha$ in the canonical expansion of $n$ in base $a$ ($N(\beta,b)$ analogously), then
$$
N(\alpha,a)+N(\beta,b)>\frac{\log\log n}{\log\log\log n+C}-1,\qquad (n>25)
$$
provided by $(\log a)/(\log b)$ is irrational. Here $C$ is an effectively computable positive real number depending on $a$ and $b$ only.

This result is followed by several papers on Diophantine equations concerning multi-base representation  of integers, see, for example \cite{BHLS} and the references therein. 

Both sides of equation (\ref{main}) can also be considered as the product of two terms of a binary recurrence, respectively. More generally, but only with one term on both sides Schlickewei and Schmidt \cite{SS1} characterized all the pairs of recurrences $(G, H)$ having infinitely many solution to $G_x=F_y$. Ddamulira, Luca and Rakotomalala \cite{DLR} considered first the akin problem with two terms on one side. They gave all Fibonacci numbers which are products of two Pell
numbers, and all Pell numbers which are products of two Fibonacci numbers. Hence the equations $F_z=P_xP_y$, and $P_z=F_xF_y$ were completely solved. In this paper, we allow products with two terms on both sides, where the two binary recurrences are representatives from the same class of sequences, which is a novel feature. The technique used in our proof is a variant of the combination of Baker's method and reduction procedures like LLL-algorithm, and a generalization of a result of Baker and Davenport by Dujella and Peth\H o \cite{DP}. We mention that a similar approach should work for equations of the same type involving products of more terms. However, this will certainly increase the amount of necessary computations. The principal result is recorded in the following

\begin{theorem}\label{t}
If equation (\ref{main}) holds for the positive integers $x_1\le x_2$ and $y_1\le y_2$, then
$$
x_1=1,\; x_2=2,\; y_1=1,\; y_2=1.
$$
\end{theorem}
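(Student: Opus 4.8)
The plan is to prove Theorem~\ref{t} by reducing \eqref{main} to finitely many cases via a sequence of linear forms in logarithms estimates, followed by LLL-style reduction. First I would dispose of the very small cases: if $x_2$ (equivalently $y_2$) is bounded by some explicit small constant, a direct finite search settles everything, so from now on assume $x_2$ (hence $y_2$) is large. Rewriting \eqref{main} as
\begin{equation*}
3^{x_1+x_2}-3^{x_1}-3^{x_2}+1 = 5^{y_1+y_2}-5^{y_1}-5^{y_2}+1,
\end{equation*}
the dominant terms on each side are $3^{x_1+x_2}$ and $5^{y_1+y_2}$, so one expects $x_1+x_2$ and $y_1+y_2$ to be comparable; more precisely $|(x_1+x_2)\log 3-(y_1+y_2)\log 5|$ is small. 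Set $u=x_1+x_2$, $v=y_1+y_2$. The first step is to bound $v$ (equivalently $u$) linearly in terms of the "gaps" $x_2-x_1$ and $y_2-y_1$ using a first application of Baker's theory: the quantity $3^{u}-5^{v}$ equals $3^{x_1}+3^{x_2}-5^{y_1}-5^{y_2}$, which in absolute value is at most a constant times $3^{x_2}=3^{(u+(x_2-x_1))/2}$, and this forces a lower bound on $|u\log3-v\log5|$ via Baker's lower bound for linear forms in two logarithms, yielding an inequality of the shape $u \ll (x_2-x_1)+ (\text{small})$ after comparing with the trivial upper bound $|3^u-5^v|\asymp 3^u$.

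Next I would extract finer information by looking modulo powers of $3$ and of $5$, and by peeling off dominant terms. Writing the equation as $3^{x_1}(3^{x_2-x_1}-1)\cdot(\text{unit-like factor}) = 5^{y_1}(5^{y_2-y_1}-1)\cdot(\text{unit-like factor})$ is not literally a factorization, but dividing \eqref{main} through by $3^{x_1}5^{y_1}$ or by the largest common structure and isolating the two largest surviving powers produces a second, independent linear form in (at most three) logarithms, now involving $x_2-x_1$ or $y_2-y_1$ together with $u$ (or $v$). Applying Matveev's theorem to this second form bounds $x_2-x_1$ (resp.\ $y_2-y_1$) by something like $C\log u$ with an explicit constant $C$. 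Feeding that back into the first inequality $u\ll (x_2-x_1)+\cdots \ll \log u$ forces an absolute, explicit (if astronomically large) upper bound $u<U_0$, hence bounds on all four of $x_1,x_2,y_1,y_2$.

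The final step is the computational reduction. With $u$ bounded by $U_0$, for each admissible value of the small parameter $x_2-x_1$ (and similarly $y_2-y_1$, both now bounded by a modest constant from the previous step) the relevant linear form in logarithms has bounded integer coefficients except for the large one, and I would apply the Baker--Davenport-type lemma of Dujella and Peth\H{o} \cite{DP}, or directly the LLL algorithm on the associated lattice, to slash $U_0$ down to a value small enough (single or low double digits) that an exhaustive check of the remaining tuples is feasible; that check then leaves only $(x_1,x_2,y_1,y_2)=(1,2,1,1)$, since $(3^1-1)(3^2-1)=16=(5^1-1)(5^1-1)$. The main obstacle I anticipate is organizing the linear forms so that the coefficient of each logarithm is genuinely controlled: because both sides are \emph{products of two} terms, the naive dominant-term analysis leaves cross terms like $3^{x_1}+3^{x_2}$ whose size depends on the gap $x_2-x_1$, so one must run the argument in two nested stages (first bound $u$ in terms of the gaps, then bound the gaps in terms of $\log u$) and be careful that the constants from Matveev's theorem in the second stage, which depend on the number of logarithms and the height of $3^{u}$, feed back consistently; keeping track of these interdependencies, and ensuring the reduction step has a small enough modulus/denominator to actually converge, is where the real work lies.
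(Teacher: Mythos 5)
Your overall strategy---a cascade of Matveev-type lower bounds followed by LLL/Baker--Davenport reduction and a finite search---is indeed the paper's strategy, and your first linear form $3^{u}5^{-v}-1$, bounded above by a constant times $3^{-x_1}$ (equivalently $5^{-y_1}$), is exactly the paper's $\Gamma_1$. But the middle of your argument has a genuine gap: you propose to close the loop with \emph{one} further application of Matveev, to a form ``in at most three logarithms,'' claimed to bound the gap $x_2-x_1$ by roughly $C\log u$. Since $x_2-x_1=u-2x_1$ and $x_1\ll\log u$ is already known from stage one, bounding the gap by a power of $\log u$ is \emph{equivalent} to bounding $u$ itself; it is the final conclusion, not an accessible intermediate step. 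No three-logarithm form built from this equation delivers it: the natural second form, e.g.\ $(3^{x_1}-1)3^{x_2}5^{-(y_1+y_2)}-1=(3^{x_1}-5^{y_1}-5^{y_2})5^{-(y_1+y_2)}$, is small only like $5^{-y_1}$, so it bounds the \emph{other} small exponent $y_1$ (thereby controlling the height of $5^{y_1}-1$), and only then can one form the four-logarithm quantity $(3^{x_1}-1)3^{x_2}\bigl((5^{y_1}-1)5^{y_2}\bigr)^{-1}-1$, which is small like $5^{-y_2}$ and finally bounds $y_2$, hence $v$ and $u$. The paper therefore needs a three-stage cascade with $2$, $3$ and $4$ logarithms, the heights $h(3^{x_1}-1)\ll Z$ and $h(5^{y_1}-1)\ll Z^2$ feeding forward to give $u\ll Z^4$ with $Z=1+\log u$; your two-stage plan never reaches $x_2$ (equivalently $y_2$).

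A second, related misstep is the organization of the reduction: the quantities that become genuinely small are the \emph{smaller exponents}, $x_1\le 118$ and $y_1\le 81$, not the gaps, which remain as large as $x_2$ itself (astronomical before reduction, still in the hundreds afterwards). The Dujella--Peth\H{o} lemma is applied with the inhomogeneous shift $\mu=\log(3^{x_1}-1)/\log 5$ (resp.\ $\log(5^{y_1}-1)/\log 3$), so one enumerates over $x_1$ and $y_1$; enumerating over ``admissible values of $x_2-x_1$, bounded by a modest constant'' rests on a false premise. Finally, you would still have to treat $x_1\in\{1,2\}$ and $y_1=1$ separately, with $x_2,y_2$ large: the reduction inequalities require the relevant $|\Gamma|$ to be less than $3/4$ before passing to the linear form in logarithms, which forces $x_1\ge3$ and $y_1\ge2$, and the leftover cases need their own run of the machinery (and, for $x_1=2$, a congruence argument modulo $819$). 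Your opening move of disposing of small $x_2$ does not cover small $x_1$ with large $x_2$.
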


We note that the method of Bert\'ok and Hajdu described in \cite{BH} probably also helps to solve (\ref{main}). This was confirmed by Bert\'ok via a personal communication.

\section{Preliminaries}

Here we list a few results which will be necessary later. Put $\lambda=\log 5/\log 3$.

\begin{lemma}\label{l1}
If $a\ge3$ is a real number and $x_1,x_2$ are positive integers, then 
$$a^{x_1+x_2-1}<(a^{x_1}-1)(a^{x_2}-1)<a^{x_1+x_2}.$$
\end{lemma}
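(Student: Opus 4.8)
The plan is to prove the two inequalities separately by entirely elementary estimates; no deep machinery is needed. For the right-hand inequality, observe that $a\ge 3$ and $x_i\ge 1$ force $0<a^{x_i}-1<a^{x_i}$ for $i=1,2$, so multiplying these two inequalities between positive reals yields $(a^{x_1}-1)(a^{x_2}-1)<a^{x_1}a^{x_2}=a^{x_1+x_2}$.

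For the left-hand inequality, I would expand the product as $(a^{x_1}-1)(a^{x_2}-1)=a^{x_1+x_2}-a^{x_1}-a^{x_2}+1$, so that the claim $a^{x_1+x_2-1}<(a^{x_1}-1)(a^{x_2}-1)$ becomes equivalent to $a^{x_1+x_2}-a^{x_1+x_2-1}>a^{x_1}+a^{x_2}-1$, that is, to $a^{x_1+x_2-1}(a-1)>a^{x_1}+a^{x_2}-1$. Assuming without loss of generality $x_1\le x_2$, the left-hand side is at least $a^{x_2}(a-1)\ge 2a^{x_2}$, using $x_1+x_2-1\ge x_2$ and $a-1\ge 2$, whereas the right-hand side satisfies $a^{x_1}+a^{x_2}-1\le 2a^{x_2}-1<2a^{x_2}$. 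Chaining these bounds gives the desired strict inequality.

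There is essentially no real obstacle here; the only point worth flagging is that the hypothesis $a\ge 3$ (rather than merely $a>1$) is used precisely in the step $a-1\ge 2$ of the lower bound, and the lower bound genuinely fails for $a$ close to $1$, so this slack is necessary. One could equally dispense with the case split by writing $a^{x_1}+a^{x_2}\le 2\max(a^{x_1},a^{x_2})$ directly, but the symmetry of the statement makes the ``without loss of generality'' phrasing cleanest.
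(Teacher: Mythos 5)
Your proof is correct and follows essentially the same elementary route as the paper: the upper bound is immediate, and the lower bound comes from expanding the product and using $a\ge 3$ to absorb the terms $a^{x_1}+a^{x_2}$. The paper merely arranges the same estimate differently, dividing by $a^{x_1+x_2}$ and bounding $1-a^{-x_1}-a^{-x_2}+a^{-x_1-x_2}$ below by $1-\tfrac13-\tfrac13=\tfrac13\ge\tfrac1a$, whereas you compare $a^{x_1+x_2-1}(a-1)\ge 2a^{x_2}$ against $a^{x_1}+a^{x_2}-1<2a^{x_2}$; both hinge on exactly the slack you flag at $a\ge 3$.
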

\begin{proof}
The second inequality is obvious. The first one follows from 
$$
1-\frac{1}{a^{x_1}}-\frac{1}{a^{x_2}}+\frac{1}{a^{x_1+x_2}}>1-\frac{1}{a^{x_1}}-\frac{1}{a^{x_2}}\ge 1-\frac{1}{3^{x_1}}-\frac{1}{3^{x_2}}\ge \frac{1}{3}\ge\frac{1}{a}. 
$$
\end{proof}

\begin{corol}\label{c1}
Assume that the positive integers $x_1,x_2,y_1$ and $y_2$ satisfy (\ref{main}). Then
\begin{itemize}
	\item $(x_1+x_2-1)\log 3<(y_1+y_2)\log 5$,
	\item $(y_1+y_2-1)\log 5<(x_1+x_2)\log 3$.
\end{itemize}
\end{corol}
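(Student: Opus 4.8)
The plan is to read off the claim directly from Lemma~\ref{l1} together with the equality~(\ref{main}). First I would apply Lemma~\ref{l1} with $a=3$ and exponents $x_1\le x_2$ to the left-hand side of~(\ref{main}), which is legitimate since $3\ge 3$, obtaining
\[
3^{\,x_1+x_2-1}<(3^{x_1}-1)(3^{x_2}-1)<3^{\,x_1+x_2}.
\]
Then I would apply Lemma~\ref{l1} again, now with $a=5\ge 3$ and exponents $y_1\le y_2$, to the right-hand side of~(\ref{main}), obtaining
\[
5^{\,y_1+y_2-1}<(5^{y_1}-1)(5^{y_2}-1)<5^{\,y_1+y_2}.
\]

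Next, since~(\ref{main}) says the two central products are equal, I would chain the inequalities through this common value. On one hand it exceeds $3^{\,x_1+x_2-1}$ and is smaller than $5^{\,y_1+y_2}$, so $3^{\,x_1+x_2-1}<5^{\,y_1+y_2}$; on the other hand it exceeds $5^{\,y_1+y_2-1}$ and is smaller than $3^{\,x_1+x_2}$, so $5^{\,y_1+y_2-1}<3^{\,x_1+x_2}$. Taking logarithms (which preserves the inequalities, as $\log$ is increasing) yields precisely $(x_1+x_2-1)\log 3<(y_1+y_2)\log 5$ and $(y_1+y_2-1)\log 5<(x_1+x_2)\log 3$, i.e.\ the two asserted bounds.

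There is essentially no real obstacle: the corollary is a pure bookkeeping consequence of Lemma~\ref{l1}. The only point requiring a little care is to pair up the correct one-sided estimates --- the lower bound on one side with the upper bound on the other --- so that after substituting through~(\ref{main}) the resulting inequalities point in the intended direction; pairing them the wrong way would give vacuous statements. It is worth recording both directions symmetrically here, since together they pin down $y_1+y_2$ and $x_1+x_2$ to within an additive constant of each other once scaled by $\lambda=\log 5/\log 3$, which is exactly the shape needed for the later estimates.
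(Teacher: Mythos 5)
Your proof is correct and is exactly the argument the paper intends: apply Lemma~\ref{l1} with $a=3$ and $a=5$ to the two sides of~(\ref{main}), chain the lower bound of one side against the upper bound of the other, and take logarithms. The paper's own proof is just the one-line instruction ``Apply Lemma~\ref{l1} and~(\ref{main})'', so you have simply written out the same reasoning in full.
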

\begin{proof}
Apply Lemma \ref{l1} and (\ref{main}).
\end{proof}

\begin{lemma}\label{l2}
If $x_1+x_2\le 3$ or $y_1+y_2\le 3$ holds with the positive integers $x_1\le x_2$ and $y_1\le y_2$, then (\ref{main}) possesses only the solution $(x_1,x_2,y_1,y_2)=(1,2,1,1)$. 
\end{lemma}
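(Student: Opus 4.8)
The plan is to reduce each of the two hypotheses to the inspection of finitely many tuples, using only the elementary bounds of Lemma \ref{l1}. Since $x_1\le x_2$ are positive integers, $x_1+x_2\le 3$ leaves only $(x_1,x_2)\in\{(1,1),(1,2)\}$, so the left-hand side of (\ref{main}) equals $(3-1)^2=4$ or $(3-1)(3^2-1)=16$; symmetrically, $y_1+y_2\le 3$ leaves only $(y_1,y_2)\in\{(1,1),(1,2)\}$, so the right-hand side equals $(5-1)^2=16$ or $(5-1)(5^2-1)=96$. In each case I would fix the ``small'' side at its possible values and then apply Lemma \ref{l1} to the opposite side to confine its exponent sum to a range of length at most one.

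Assume first $x_1+x_2\le 3$. If the left-hand side is $4$, then by Lemma \ref{l1} applied with $a=5$ we get $5^{y_1+y_2-1}<4$, which is impossible since $y_1+y_2\ge 2$ gives $5^{y_1+y_2-1}\ge 5$. If the left-hand side is $16$, then $5^{y_1+y_2-1}<16<5^{y_1+y_2}$ forces $y_1+y_2=2$, hence $(y_1,y_2)=(1,1)$, and indeed $(5-1)^2=16$. So the only solution in this case is $(x_1,x_2,y_1,y_2)=(1,2,1,1)$.

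Now assume $y_1+y_2\le 3$. If the right-hand side is $16$, then Lemma \ref{l1} with $a=3$ gives $3^{x_1+x_2-1}<16<3^{x_1+x_2}$, so $x_1+x_2=3$, i.e. $(x_1,x_2)=(1,2)$, which works. If the right-hand side is $96$, then $3^{x_1+x_2-1}<96<3^{x_1+x_2}$ forces $x_1+x_2=5$, leaving the two candidates $(x_1,x_2)\in\{(1,4),(2,3)\}$, which give $2\cdot 80=160$ and $8\cdot 26=208$ respectively; neither equals $96$. Hence again only $(x_1,x_2,y_1,y_2)=(1,2,1,1)$ survives.

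No genuine obstacle is expected: this lemma is exactly the finite ``base case'' that the later Baker-type estimates will be reduced to, and it is settled by Lemma \ref{l1} together with a handful of explicit products. The only point requiring slight care is that the hypothesis $y_1+y_2\le 3$ does not on its own bound $x_1+x_2$ below $4$; it is the estimate $96<3^{x_1+x_2}$ that closes this gap, after which the two leftover pairs are eliminated by direct evaluation. (One could instead run the whole argument through Corollary \ref{c1}, bounding the complementary exponent sum directly, and reach the same conclusion.)
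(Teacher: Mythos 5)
Your proof is correct and is essentially the paper's argument written out in full: the paper simply says the statement "easily follows by directly checking all four possible cases," and your four cases (the two values of the fixed-side product under each hypothesis, resolved via Lemma \ref{l1} and direct evaluation) are exactly that check.
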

\begin{proof}
The statement easily follows by directly checking all four possible cases.
\end{proof}

\begin{lemma}\label{l3}
Assume that (\ref{main}) holds, moreover $y_1+y_2\ge4$. Then $y_1+y_2<x_1+x_2$.
\end{lemma}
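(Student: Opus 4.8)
The plan is to argue by contradiction, deriving from the assumed inequality $y_1+y_2\ge x_1+x_2$ an upper bound on $y_1+y_2$ that conflicts with the hypothesis $y_1+y_2\ge4$.

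Concretely, I would start from the second inequality furnished by Corollary~\ref{c1}, namely $(y_1+y_2-1)\log 5<(x_1+x_2)\log 3$, which holds for any positive-integer solution of \eqref{main}. Substituting the (to-be-refuted) inequality $x_1+x_2\le y_1+y_2$ into its right-hand side gives
\[
(y_1+y_2-1)\log 5<(y_1+y_2)\log 3,
\]
hence $(y_1+y_2)\bigl(\log 5-\log 3\bigr)<\log 5$, and therefore
\[
y_1+y_2<\frac{\log 5}{\log 5-\log 3}<3.16 .
\]
This forces $y_1+y_2\le 3$, which contradicts $y_1+y_2\ge4$, so $y_1+y_2<x_1+x_2$ must hold.

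I do not expect any real obstacle here: the argument is a single application of Corollary~\ref{c1} together with the elementary estimate $\log 5/(\log 5-\log 3)<3.16$, which can be certified rigorously from, say, $\log 5<1.61$ and $\log 5-\log 3>0.51$. (A slightly longer alternative would be to note that $x_1+x_2\le y_1+y_2$ together with the same computation yields $x_1+x_2\le 3$, and then to quote Lemma~\ref{l2}; but the direct bound above is shorter and requires no case checking.)
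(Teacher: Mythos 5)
Your proof is correct and is essentially the paper's own argument in contrapositive form: the paper directly verifies $y_1+y_2<\lambda(y_1+y_2-1)$ for $y_1+y_2\ge4$ (where $\lambda=\log5/\log3$, so the threshold is the same $\log5/(\log5-\log3)\approx3.15$) and then applies the second inequality of Corollary~\ref{c1}. No issues.
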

\begin{proof}
A short calculation admits $y_1+y_2<\lambda(y_1+y_2-1)$. Now the second statement of Corollary \ref{c1} proves the lemma.
\end{proof}

\begin{lemma}\label{l4}
Equation (\ref{main}) implies $2\nmid y_i$, $4\nmid x_i$, where $i\in\{1,2\}$.
\end{lemma}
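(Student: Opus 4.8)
The plan is to settle both assertions by elementary congruences modulo the primes $3$ and $5$.

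For the statement $2\nmid y_i$, I would reduce (\ref{main}) modulo $3$. Since $x_1,x_2\ge 1$, each factor on the left satisfies $3^{x_i}-1\equiv -1\pmod 3$, so the left-hand side is $\equiv 1\pmod 3$. On the right-hand side $5\equiv -1\pmod 3$, whence $5^{y_i}-1\equiv (-1)^{y_i}-1\pmod 3$, and this is $0$ exactly when $y_i$ is even. Thus if either $y_1$ or $y_2$ were even, the right-hand side would be divisible by $3$, contradicting the residue $1$ just computed for the left-hand side. Hence both $y_1$ and $y_2$ are odd.

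For the statement $4\nmid x_i$, I would instead reduce (\ref{main}) modulo $5$. Here $5^{y_i}-1\equiv -1\pmod 5$, so the right-hand side is $\equiv(-1)(-1)=1\pmod 5$. On the left-hand side the multiplicative order $\ord_5(3)$ equals $4$ (the successive residues are $3,4,2,1$), so $5\mid 3^{x_i}-1$ if and only if $4\mid x_i$. Consequently, if $4\mid x_i$ for some $i$, then $5$ divides the left-hand side, again contradicting the residue $1$ on the right. Therefore $4\nmid x_i$ for $i\in\{1,2\}$.

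I do not anticipate any real obstacle: the only decision is which moduli to test, and $3$ and $5$ are natural because reduction modulo $3$ annihilates the factors $3^{x_i}-1$ on the left while $\ord_3(5)=2$ governs the parity of the $y_i$, and symmetrically reduction modulo $5$ annihilates $5^{y_i}-1$ on the right while $\ord_5(3)=4$ governs divisibility of the $x_i$ by $4$. Both order computations are immediate to verify by hand.
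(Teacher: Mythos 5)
Your proof is correct and follows exactly the paper's (one-line) argument: reduce (\ref{main}) modulo $3$ to force both $y_i$ odd, and modulo $5$, using $\ord_5(3)=4$, to force $4\nmid x_i$. You have simply supplied the details the paper leaves to the reader.
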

\begin{proof}
Consider (\ref{main}) modulo 3, and 5, respectively.
\end{proof}

\begin{lemma}\label{l5}
If the real numbers $x$ and $K$ satisfy $|e^x-1|<K<3/4$, then $|x|<2K$.
\end{lemma}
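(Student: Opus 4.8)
The plan is a simple case analysis on the sign of $x$, which determines the sign of $e^x-1$ and hence lets us remove the absolute value in the hypothesis. Note first that $|e^x-1|<K$ forces $K>0$, and the case $x=0$ is immediate, so we may assume $x\neq 0$.

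If $x>0$ then $e^x-1>0$, so the hypothesis reads $e^x<1+K$, and taking logarithms gives $x<\log(1+K)$. Since $\log(1+t)<t$ for every $t>0$, we obtain $x<K<2K$, which is more than enough. If $x<0$ then $e^x-1<0$, so the hypothesis reads $e^x>1-K$; at this point the assumption $K<3/4$ is used to guarantee $1-K>0$ before taking logarithms, yielding $x>\log(1-K)$. The lemma then reduces to the single inequality $\log(1-K)\ge -2K$ for $0<K<3/4$, since multiplying by $-1$ turns this into $|x|=-x<2K$.

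I expect the only step requiring any care to be this last inequality, which I would settle by elementary calculus: set $f(K)=\log(1-K)+2K$, so that $f(0)=0$ and $f'(K)=(1-2K)/(1-K)$; hence $f$ increases on $(0,1/2)$ and decreases on $(1/2,3/4)$, and since the endpoint values $f(0)=0$ and $f(3/4)=\tfrac{3}{2}-\log 4$ are both nonnegative, $f$ stays positive throughout the interval. It is worth noting that the constant $3/4$ cannot be pushed much higher, as $\log(1-K)\ge -2K$ fails for $K$ slightly below $0.8$; apart from pinning down that the given constant is small enough, the argument is routine.
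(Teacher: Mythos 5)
Your proof is correct and complete; the case split on the sign of $x$, the reduction to $\log(1-K)\ge -2K$ on $(0,3/4)$, and the calculus verification of that inequality are all sound. The paper itself offers no argument here (it merely states that the assertion "can be easily checked"), so your write-up simply supplies the routine details the authors chose to omit.
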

\begin{proof}
{The assertion can be easily checked.}
\end{proof}

We need the following theorem from the theory of lower bounds on linear forms in logarithms of algebraic numbers. Recall Theorem 9.4 of \cite{BMS}, which is a modified version of a result of Matveev \cite{M}. Let $\L$ be an algebraic number field of degree $d_{\L}$ and let $\eta_1, \eta_2, \ldots, \eta_l \in \L$  not $0$
or $1$ and $d_1, \ldots, d_l$ be nonzero integers. Put
$$
D =\max\{|d_1|, \ldots, |d_l|, 3\}\qquad{\rm and}\qquad
\Gamma = \prod_{i=1}^l \eta_i^{d_i}-1.
$$
Let $A_1, \ldots, A_l$ be positive integers such that 
$$
A_j \geq h'(\eta_j) := \max \{d_{\L}h(\eta_j), |\log \eta_j|, 0.16\}, \quad {\text{\rm for}}\quad j=1,\ldots l,
$$
where for an algebraic number $\eta$ with minimal polynomial
$$
f(X)=a_0(X-\eta^{(1)})\cdots(X-\eta^{(u)})\in {\mathbb Z}[X]
$$
with positive $a_0$, we write $h(\eta)$ for its Weil height given by
$$
h(\eta)=\frac{1}{u}\left(\log a_0+\sum_{j=1}^u \max\{0,\log |\eta^{(j)}|\}\right).
$$
\begin{lemma} \label{Matveev}
 If $\Gamma \neq 0$ and $L \subseteq R $, then
\begin{equation*}
\label{ineq:matveev} \log |\Gamma| > -1.4 \cdot
30^{l+3}l^{4.5}d_{\L}^2(1+ \log d_{\L})(1+ \log D)A_1A_2\cdots A_l.
\end{equation*}
\end{lemma}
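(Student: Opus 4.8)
The stated inequality is a deep theorem of transcendence theory; a complete proof occupies Matveev's papers \cite{M} and the reformulation \cite{BMS}, so here I only outline the architecture one follows and flag where the real difficulty lies. Set $\Lambda=\sum_{i=1}^{l}d_i\log\eta_i$, so that $\Gamma=e^{\Lambda}-1$. If $|\Lambda|\ge 1$, then $|\Gamma|$ is bounded away from $0$ and the inequality is trivial; hence one may assume $|\Lambda|$ is small, in which case $|\Gamma|$ and $|\Lambda|$ are comparable and it suffices to bound $|\Lambda|$ from below. The whole argument is a proof by contradiction: one supposes $|\Lambda|$ is smaller than the asserted bound and derives an absurdity, and it is the quantitative tracking of parameters through this argument that produces the explicit constant $1.4\cdot 30^{l+3}l^{4.5}d_{\L}^2(1+\log d_{\L})(1+\log D)A_1\cdots A_l$.

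The engine is Baker's method. First I would introduce auxiliary integer parameters $S$ and $T$ and, by Siegel's lemma (a pigeonhole bound producing a nonzero integer solution of small height to an underdetermined linear system), construct a nonzero auxiliary function of the shape $F(z)=\sum_{\lambda}p(\lambda)\,z^{\lambda_0}\prod_{i=1}^{l}\eta_i^{\lambda_i z}$, with rational integer coefficients $p(\lambda)$ of controlled size, whose derivatives up to order $T$ vanish at $z=1,\dots,S$. The assumption that $\Lambda$ is tiny means the numbers $\log\eta_i$ are, to very high accuracy, linearly dependent over $\QQ$, and substituting this near-relation (Baker's substitution) is precisely what couples the smallness of $\Lambda$ to the arithmetic behaviour of $F$.

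The heart of the method is then the clash of an analytic and an arithmetic estimate. On the analytic side, the near-dependence permits extrapolation: applying the maximum-modulus principle (a Schwarz-lemma argument, dividing $F$ by a high power of $\prod_{s}(z-s)$) one shows that $F$ and its derivatives stay very small on a much larger range of points and orders than was originally imposed, with the smallness governed by $|\Lambda|$. On the arithmetic side, a nonzero value $F^{(t)}(s)$ equals, up to an explicit power of $\log\eta_l$, a nonzero algebraic number of $\L$; hence by the product formula (the absolute norm of a nonzero algebraic integer is at least $1$) its modulus is bounded below by a quantity controlled by the heights $h(\eta_i)$, and thus by the $A_i$. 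A zero (multiplicity) estimate guarantees that the nonzero $F$ cannot vanish to the extrapolated order at all those points, so some $F^{(t)}(s)$ is nonzero; that value is then simultaneously small (analytic bound) and not too small (arithmetic bound). Choosing $S$ and $T$ optimally converts this contradiction into the stated lower bound.

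The hard part is twofold, and it is exactly what the cited works carry out in full. The genuinely difficult ingredient is the zero estimate together with the nonvanishing of the auxiliary data: one must rule out pathological vanishing of $F$, and arranging for this to cost only a factor growing like $30^{l}$ in the number $l$ of logarithms, rather than something far worse, is the substance of Matveev's refinement. The second difficulty is sharp bookkeeping: the clean explicit constant requires optimal choices of $S$, $T$ and the index set, and sharp estimates for every height and derivative, which is lengthy but essentially routine once the structure above is fixed. Finally, since in our setting $\L\subseteq\RR$, the real case spares one from separately controlling the arguments of the logarithms and yields the sharper numerical constant recorded in the statement.
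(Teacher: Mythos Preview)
The paper does not prove this lemma at all: it is quoted verbatim as Theorem~9.4 of \cite{BMS}, itself a reformulation of Matveev's theorem \cite{M}, and is used as a black box throughout the argument. So there is no ``paper's own proof'' to compare against.

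Your outline of Baker's method is a reasonable high-level sketch of how such lower bounds are obtained (auxiliary function via Siegel's lemma, extrapolation via Schwarz, Liouville-type arithmetic lower bound, zero estimate, and Matveev's refinement controlling the dependence on $l$), and you are honest that the real work lies in the cited references. That is appropriate here: a self-contained proof of the explicit constant would run to dozens of pages and is well beyond the scope of a paper that merely applies the result. If anything, your write-up is more than the paper itself offers; for the purposes of this paper the correct ``proof'' is simply a citation.
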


We also refer to the Baker-Davenport reduction method (see \cite[Lemma 5a]{DP}), which will be useful to reduce the bounds arising at the application of Lemma \ref{Matveev}.

\begin{lemma}\label{BD}
Let $\kappa\ne 0$ and $\mu$ be real numbers. Assume that $M$ is a positive integer. Let $P/Q$ be a convergent
of the continued fraction expansion of $\kappa$ such that $Q > 6M$,
and put
$$
\xi=\left\| \mu Q \right\| - M \cdot \| \kappa Q\|,
$$
where $\parallel\cdot\parallel$ denotes the distance from the
nearest integer. If $\xi>0$, then there is no solution of the
inequality
$$0 < |m\kappa -n+\mu| < AB^{-k}$$
in positive integers $m$, $n$ and $k$ with
$$
\frac{\log\left(AQ/\xi\right)}{\log B}\leq k \qquad\ and\qquad m\le M.
$$
\end{lemma}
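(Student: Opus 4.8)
The plan is to argue by contradiction, turning the hypothesized smallness of $m\kappa-n+\mu$ into a lower bound via the convergent $P/Q$ that contradicts the assumed range of $k$. So I would suppose that, against the conclusion, there exist positive integers $m,n,k$ with $m\le M$, with $k\ge \log(AQ/\xi)/\log B$, and with $0<|m\kappa-n+\mu|<AB^{-k}$. Multiplying this inequality by $Q$ rescales the target quantity to $Q(m\kappa-n+\mu)$, for which $|Q(m\kappa-n+\mu)|<AQB^{-k}$. The whole strategy is to show that the distance of $Q(m\kappa-n+\mu)$ from the nearest integer is at least $\xi$; coupled with the displayed upper bound this forces $\xi<AQB^{-k}$, hence $B^{k}<AQ/\xi$, and (since $B>1$) $k<\log(AQ/\xi)/\log B$, which is exactly the negation of the lower bound assumed on $k$.

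To produce that lower bound I would set $\delta=Q\kappa-P$ and rewrite
$$
Q(m\kappa-n+\mu)=(mP-nQ)+\mu Q+m\delta ,
$$
where $mP-nQ\in\mathbb{Z}$. Because the distance to the nearest integer is invariant under integer translation, the distance of the left-hand side from $\mathbb{Z}$ equals $\|\mu Q+m\delta\|$. Subadditivity of $\|\cdot\|$ then gives $\|\mu Q+m\delta\|\ge\|\mu Q\|-\|m\delta\|$, and since $\|m\delta\|\le|m\delta|=m|\delta|\le M|\delta|$ (using $m\le M$), I obtain
$$
\|\mu Q+m\delta\|\ge\|\mu Q\|-M|\delta|.
$$
Granting the identification $|\delta|=\|\kappa Q\|$, the right-hand side is precisely $\xi$, and the two estimates collide: $\xi\le\|\mu Q+m\delta\|=\|Q(m\kappa-n+\mu)\|\le|Q(m\kappa-n+\mu)|<AQB^{-k}$, as planned.

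The one point that genuinely needs care — the main, though mild, obstacle — is the identity $|\delta|=\|\kappa Q\|$, i.e. that $P$ is the nearest integer to $Q\kappa$. Here the convergent property is essential: for a convergent $P/Q$ one has $|Q\kappa-P|<1/Q'$, where $Q'>Q$ is the denominator of the next convergent, so $|Q\kappa-P|<1/Q$. The hypothesis $Q>6M\ge6$ then yields $|Q\kappa-P|<1/6<1/2$, which pins $P$ as the unique nearest integer and gives $\|\kappa Q\|=|Q\kappa-P|=|\delta|$. With this in hand the chain of inequalities closes and produces $k<\log(AQ/\xi)/\log B$, contradicting $k\ge\log(AQ/\xi)/\log B$. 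The contradiction shows that no such triple $(m,n,k)$ can exist, which is the assertion of the lemma. I expect every step beyond the convergent estimate to be routine manipulation of the distance-to-nearest-integer function.
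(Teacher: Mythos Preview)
Your argument is correct and is the standard proof of this reduction lemma. Note, however, that the paper does not supply its own proof of this statement: it merely quotes the lemma from Dujella and Peth\H{o} \cite[Lemma~5a]{DP} and uses it as a black box. So there is nothing in the paper to compare your proof against; what you have written is essentially the original argument from \cite{DP}, and every step (the rewriting $Q(m\kappa-n+\mu)=(mP-nQ)+\mu Q+m\delta$, the subadditivity of $\|\cdot\|$, and the convergent estimate $|Q\kappa-P|<1/Q<1/2$ guaranteeing $|\delta|=\|\kappa Q\|$) is sound.
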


\section{Proof of Theorem \ref{t}}

\subsection{The first bound}\label{ss1}

Recall that $\lambda=\log 5/\log 3$. Suppose that the positive integers $x_1\le x_2$ and $y_1\le y_2$ satisfy (\ref{main}). 
According to Lemma \ref{l2} we may assume $x_1+x_2\ge4$ and $y_1+y_2\ge4$.
Then
$$
\left|\frac{3^{x_1+x_2}}{5^{y_1+y_2}}-1\right|=\left|\frac{3^{x_1}+3^{x_2}-5^{y_1}-5^{y_2}}{5^{y_1+y_2}}\right|<
\frac{4\max\{3^{x_2},5^{y_2}\}}{5^{y_1+y_2}},
$$
and denote this upper bound by $B_1$. 
{The assumption}
$\max\{3^{x_2},5^{y_2}\}=5^{y_2}$ leads immediately to
$B_1=4/5^{y_1}$. Contrary, if $\max\{3^{x_2},5^{y_2}\}=3^{x_2}$, then
$$
B_1=\frac{4\cdot3^{x_2}}{5^{y_1+y_2}}=\frac{4}{5^{y_1+y_2-x_2/\lambda}}<\frac{4}{5^{(x_1+x_2-1)/\lambda-x_2/\lambda}}=\frac{4\cdot5^{1/\lambda}}{5^{x_1/\lambda}}=\frac{12}{5^{x_1/\lambda}},
$$
where the inequality follows from Corollary \ref{c1}. Thus we conclude
\begin{equation}\label{e1}
\left|\frac{3^{x_1+x_2}}{5^{y_1+y_2}}-1\right|<\frac{12}{5^{\min\{x_1/\lambda,y_1\}}}.
\end{equation}
Let the term in the absolute value of left-hand side of (\ref{e1}) be denoted by $\Gamma_1$, which is obviously non-zero. Put $z=\max\{x_1+x_2,y_1+y_2\}$. Clearly, Lemma \ref{l3} gives  $z=x_1+x_2$. Now we apply Lemma \ref{Matveev} with $l=2$, $\eta_1=3$, $\eta_2=5$, $L=Q$, $D=z$, $A_1=\log 3$, $A_2=\log 5$. This provides
$$\log |\Gamma_1|>-1.4\cdot 10^9Z,$$ 
where $Z=(1+\log z)$, and then together with (\ref{e1}) we obtain
$$
\min\{x_1/\lambda,y_1\}<8.7\cdot10^8Z. 
$$
For the next calculations we distinguish two cases.

{\it Case 1.} Assume $3^{x_1}<5^{y_1}$, or equivalently $x_1/\lambda<y_1$. Equation (1) leads to
\begin{equation}\label{e2}
\left|\Gamma_{2,1}\right|:=\left|\frac{(3^{x_1}-1)3^{x_2}}{5^{y_1+y_2}}-1\right|=\left|\frac{3^{x_1}-5^{y_1}-5^{y_2}}{5^{y_1+y_2}}\right|<
\frac{3^{x_1}+2\cdot5^{y_2}}{5^{y_1+y_2}}=\frac{\frac{3^{x_1}}{5^{y_2}}+2}{5^{y_1}}<\frac{3}{5^{y_1}}.
\end{equation}
In order to use Lemma \ref{Matveev} again, now for $|\Gamma_{2,1}|$ we specify $l=3$, $\eta_1=3^{x_1}-1$, $\eta_2=3$, $\eta_3=5$, $L=Q$. Moreover fix
$D=\max\{1,x_2,y_1+y_2\}<z$, $A_2=\log 3$, $A_3=\log 5$. Finally, 
$$
h(3^{x_1}-1)<(\log 3)x_1=(\log 5)x_1/\lambda<(\log 5)\cdot8.7\cdot10^8Z<1.5\cdot10^9Z=A_1.
$$

{\it Case 2.} Now let $3^{x_1}>5^{y_1}$. Hence $x_1/\lambda>y_1$. Equation (1) implies
\begin{equation}\label{e3}
\left|\Gamma_{2,2}\right|:=\left|\frac{(5^{y_1}-1)5^{y_2}}{3^{x_1+x_2}}-1\right|=\left|\frac{5^{y_1}-3^{x_1}-3^{x_2}}{3^{x_1+x_2}}\right|<
\frac{5^{y_1}+2\cdot3^{x_2}}{3^{x_1+x_2}}=\frac{\frac{5^{y_1}}{3^{x_2}}+2}{3^{x_1}}<\frac{3}{3^{x_1}}.
\end{equation}
For $|\Gamma_{2,2}|$ we have $l=3$, $\eta_1=5^{y_1}-1$, $\eta_2=5$, $\eta_3=3$, $L=Q$,
$D=\max\{1,y_2,x_1+x_2\}=z$, $A_2=\log 5$, $A_3=\log 3$, and 
$h(5^{y_1}-1)<(\log 5)y_1<A_1$.

Observe that we are able to apply Lemma \ref{Matveev} simultanously for $|\Gamma_{2,1}|$ and $|\Gamma_{2,2}|$ since up to the order we have the same parameters. It gives
$$\log |\Gamma_{2,i}|>-3.798\cdot 10^{20}Z^2,\qquad (i=1,2)$$ 
consequently by (\ref{e2}), and (\ref{e3}), respectively we derive
\begin{equation}\label{isti1}
(\log 5)y_1-\log 3<(\log 5)y_1<h_1Z^2\qquad{\rm and}\qquad (\log 3)x_1-\log 3<(\log 3)x_1<h_1Z^2,
\end{equation}
where $h_1=3.8\cdot 10^{20}$.

Return again to the conditions of the separation of Cases 1 and 2 for a while.

{\it Case 1.} ($3^{x_1}<5^{y_1}$, $x_1/\lambda<y_1$.) Equation (1) also leads to
\begin{equation}\label{e4}
\left|\Gamma_{3,1}\right|:=\left|\frac{(3^{x_1}-1)3^{x_2}}{(5^{y_1}-1)5^{y_2}}-1\right|=\left|\frac{3^{x_1}-5^{y_1}}{(5^{y_1}-1)5^{y_2}}\right|<
\frac{3^{x_1}+5^{y_1}}{(5^{y_1}-1)5^{y_2}}=\frac{\frac{3^{x_1}}{5^{y_1}}+1}{\left(1-\frac{1}{5^{y_1}}\right)5^{y_2}}<\frac{3}{5^{y_2}}.
\end{equation}
Preparing the application of Lemma \ref{Matveev}, for $|\Gamma_{3,1}|$ we fix $l=4$, $\eta_1=3^{x_1}-1$, $\eta_2=3$, $\eta_3=5^{y_1}-1$, $\eta_4=5$, $L=Q$. Furthermore
$D=\max\{1,x_2,1,y_2\}<z$, $A_1=1.5\cdot10^9Z$, $A_2=\log 3$, $A_4=\log 5$, and  
$h(5^{y_1}-1)<(\log 5)y_1<3.8\cdot 10^{20}Z^2=A_3$.
We will see soon, that Case 2 essentially admits the same parameters.

{\it Case 2.} ($3^{x_1}>5^{y_1}$, $x_1/\lambda>y_1$.) We derive from (1) that 
\begin{equation}\label{e5}
\left|\Gamma_{3,2}\right|:=\left|\frac{(5^{y_1}-1)5^{y_2}}{(3^{x_1}-1)3^{x_2}}-1\right|=\left|\frac{5^{y_1}-3^{x_1}}{(3^{x_1}-1)3^{y_2}}\right|<
\frac{5^{y_1}+3^{x_1}}{(3^{x_1}-1)3^{x_2}}=\frac{\frac{5^{y_1}}{3^{x_1}}+1}{\left(1-\frac{1}{3^{x_1}}\right)3^{x_2}}<\frac{3}{3^{x_2}}.
\end{equation}
To bound $|\Gamma_{3,2}|$ from below let $l=4$, $\eta_1=5^{y_1}-1$, $\eta_2=5$, $\eta_3=3^{x_1}-1$, $\eta_4=3$, $L=Q$,
$D=\max\{1,y_1,1,x_2\}<z$, $A_1=1.5\cdot10^9Z$, $A_2=\log 5$, $A_4=\log 3$, and  
$h(3^{x_1}-1)<(\log 3)x_1<3.8\cdot 10^{20}Z^2=A_3$.

Thus, Lemma \ref{Matveev} returns with
$$\log |\Gamma_{3,i}|>-1.58\cdot 10^{43}Z^4,\qquad(i=1,2),$$ 
and finally with $h_2=1.6\cdot 10^{43}$ we have
\begin{equation}\label{isti2}
(\log 5)y_2-\log 3<(\log 5)y_2<h_2Z^4\qquad{\rm and}\qquad (\log 3)x_2-\log 3<(\log 3)x_2<h_2Z^4.
\end{equation}

Now we combine (\ref{isti1}) and (\ref{isti2}) to bound $y_1+y_2$ and $x_1+x_2$. Recall that $z=\max\{x_1+x_2,y_1+y_2\}=x_1+x_2$, $Z=1+\log z$. The right-hand sides yield
$$
(\log 3)(x_1+x_2)-2\log 3<h_1Z^2+h_2Z^4.
$$
In case of the left-hand sides together with Corollary \ref{c1} we find
$$
(\log 3)(x_1+x_2-1)-2\log 3<(\log 5)(y_1+y_2)-2\log 3<h_1Z^2+h_2Z^4. 
$$
Both inequailities provide upper bounds on $z$, the result is recorded in the following
\begin{prop}\label{th1}
Assume that $z=\max\{x_1+x_2,y_1+y_2\}\ge4$. Then for the solutions of (\ref{main}) 
\begin{equation}\label{z}
z=x_1+x_2<3\cdot10^{51}
\end{equation}
holds.
\end{prop}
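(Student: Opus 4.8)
The analytic work is already done; what remains is to turn the accumulated estimates into a single explicit inequality for $z$ and solve it. Recall from (\ref{isti1}) and (\ref{isti2}) that $(\log 3)x_1<h_1Z^2$ and $(\log 3)x_2<h_2Z^4$ with $h_1=3.8\cdot10^{20}$, $h_2=1.6\cdot10^{43}$, and that $z=x_1+x_2$ by Lemma \ref{l3}, $Z=1+\log z$. Adding the two bounds yields
$$
(\log 3)\,z<h_1(1+\log z)^2+h_2(1+\log z)^4.
$$
(The left-hand sides of (\ref{isti1})--(\ref{isti2}) together with Corollary \ref{c1} produce the same inequality apart from an extra additive $2\log 3$, so nothing is lost by this choice.) Here the left side is linear in $z$ while the right side is only a degree-four polynomial in $\log z$, so the inequality caps $z$ at an absolute constant.

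To pin that constant down I would introduce the auxiliary function
$$
g(t)=(\log 3)\,t-h_1(1+\log t)^2-h_2(1+\log t)^4-2\log 3,
$$
and observe that our inequality is exactly $g(z)<0$. A direct evaluation shows $g(3\cdot10^{51})>0$, and differentiating gives $g'(t)=\log 3-\bigl(2h_1(1+\log t)+4h_2(1+\log t)^3\bigr)/t$, which is positive (indeed close to $\log 3$) for all $t\ge 3\cdot10^{51}$ since the subtracted term is of size $O\bigl((\log t)^3/t\bigr)$ and is already below $0.04$ at $t=3\cdot10^{51}$. Hence $g$ is increasing and positive on $[\,3\cdot10^{51},\infty)$, so $g(z)<0$ is impossible once $z\ge3\cdot10^{51}$. (As a heuristic check, the substitution $u=1+\log z$ turns the inequality into $\e^{\,u-1}<(h_1u^2+h_2u^4)/\log 3$, i.e. $u<1+\log(h_2/\log 3)+4\log u+o(1)\approx 100.4+4\log u$, which forces $u<119.6$ and hence $z<3\cdot10^{51}$.) Finally $z=\max\{x_1+x_2,y_1+y_2\}=x_1+x_2$ by Lemma \ref{l3}, valid because $y_1+y_2\ge4$ by Lemma \ref{l2}, and the proposition follows.

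The only place that needs care in this last reduction is the bookkeeping of constants: one must check that the smaller term $h_1Z^2$ and the additive $2\log 3$ do not nudge the threshold past $3\cdot10^{51}$, and that the monotonicity of $g$ is genuinely verified on $[\,3\cdot10^{51},\infty)$ rather than merely asserted. Apart from that the step is routine — the real content was the two applications of Lemma \ref{Matveev}.
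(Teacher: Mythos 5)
Your proof follows the paper's own route: you combine (\ref{isti1}) and (\ref{isti2}) (using the $x$-bounds in one case and the $y$-bounds plus Corollary \ref{c1} in the other) to get $(\log 3)z\lesssim h_1Z^2+h_2Z^4$ and then solve for $z$; the paper simply asserts that this inequality "provides upper bounds on $z$", whereas you make the final step explicit via the monotonicity of $g$, and your numerics check out ($g(3\cdot10^{51})\approx 3\cdot10^{49}>0$ with $g'>0$ beyond that point). This is essentially the same argument, correctly carried out.
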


Note that 
{in Proposition \ref{th1} we have $z=x_1+x_2$ since  $x_1+x_2>y_1+y_2$ is obvious from Lemma \ref{l3}. Moreover
}
$y_1+y_2<2.1\cdot10^{51}$ follows from Proposition \ref{th1} and Corollary \ref{c1}.

\subsection{The second bound}\label{ss2}

In the sequel we assume $x_1\ge3$ and $y_1\ge2$. The remaining cases where $x_1<3$ or $y_1<2$ will be handled later, in Subsection \ref{rc}.

Now $\min\{x_1/\lambda,y_1\}\ge2$, hence the right-hand side of (\ref{e1}) does not exceed $12/25<3/4$. Consequently, Lemma \ref{l5} with $x=(x_1+x_2)\log 3-(y_1+y_2)\log 5$ implies
\begin{equation}\label{gamma4}
|\Gamma_4|:=\left|(x_1+x_2)\log 3-(y_1+y_2)\log 5\right|<\frac{24}{5^{\min\{x_1/\lambda,y_1\}}}.
\end{equation}
For the computational aspects of the application of LLL algorithm we refer to the book of H.~Cohen \cite{C}, page 58-63, moreover the {\tt LLL(lvect, integer)} command of the package IntegerRelation in Maple. We implemented the computations in Maple by following Cohen's approach. 
The LLL-algorithm (with $x_1+x_2<3\cdot 10^{51}$, $y_1+y_2<2.1\cdot 10^{51}$) provides
$$
4.5\cdot10^{-56}<|\Gamma_4|,
$$
and combining it with (\ref{gamma4}) we derive
$\min\{x_1/\lambda,y_1\}<81.2$.
Thus we have the following
\begin{prop}\label{th2}
Assume that 
$x_1\ge3$, $y_1\ge2$. If (\ref{main}) holds, then
\begin{equation*}\label{m}
\min\{x_1/\lambda,y_1\}\le81.2.
\end{equation*}
\end{prop}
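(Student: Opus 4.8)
The plan is to mimic exactly the structure used to obtain Proposition \ref{th1}, but now feeding in the much sharper bounds $x_1+x_2<3\cdot10^{51}$ and $y_1+y_2<2.1\cdot10^{51}$ coming from Proposition \ref{th1}, and to replace the Matveev estimate of Lemma \ref{Matveev} at the very last step by the explicit LLL reduction applied to the single linear form $\Gamma_4=(x_1+x_2)\log3-(y_1+y_2)\log5$. First I would record that under the standing assumptions $x_1\ge3$ and $y_1\ge2$ we have $\min\{x_1/\lambda,y_1\}\ge2$, so the right-hand side of \eqref{e1} is at most $12/5^2=12/25<3/4$. This is precisely the hypothesis needed to invoke Lemma \ref{l5} with $x=(x_1+x_2)\log3-(y_1+y_2)\log5$ and $K=12/5^{\min\{x_1/\lambda,y_1\}}$, yielding the inequality \eqref{gamma4}, namely $|\Gamma_4|<24/5^{\min\{x_1/\lambda,y_1\}}$.

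Next I would produce a lower bound for $|\Gamma_4|$ that does not grow with $z$. Since $\Gamma_4$ is a $\ZZ$-linear combination $u\log3+v\log5$ with $|u|=x_1+x_2<3\cdot10^{51}$ and $|v|=y_1+y_2<2.1\cdot10^{51}$, the problem is exactly that of showing that a short integer relation among $\log3$ and $\log5$ with bounded coefficients cannot be too small; this is handled by the LLL-algorithm in the standard way (Cohen \cite{C}, pp.~58--63): one forms the lattice spanned by the rows of a suitably scaled matrix, reduces it, and reads off from the first reduced basis vector a lower bound for the minimum of the lattice, hence for $|\Gamma_4|$. Running this with the bounds above gives $|\Gamma_4|>4.5\cdot10^{-56}$. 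Combining this lower bound with \eqref{gamma4} forces
$$
5^{\min\{x_1/\lambda,y_1\}}<\frac{24}{4.5\cdot10^{-56}},
$$
and taking logarithms in base $5$ gives $\min\{x_1/\lambda,y_1\}<81.2$, which is the assertion of Proposition \ref{th2}.

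The only genuine obstacle is the numerical LLL step: one must choose the scaling constant in the lattice large enough (comparable to the square of the coefficient bound $\sim10^{51}$, so something like $10^{105}$ or larger) that the reduced basis actually certifies a lower bound of the right order of magnitude, and one must verify $\Gamma_4\ne0$ — but $\Gamma_4\ne0$ is immediate, since $\log3$ and $\log5$ are linearly independent over $\QQ$ (equivalently, $3^{x_1+x_2}\ne5^{y_1+y_2}$ by unique factorization). Everything else is bookkeeping: the passage from \eqref{e1} to \eqref{gamma4} via Lemma \ref{l5} is routine once the $<3/4$ threshold is checked, and the final arithmetic is a one-line logarithm computation. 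I would also note in passing that the hypotheses $x_1\ge3$, $y_1\ge2$ are used only to guarantee the $12/25$ bound; the excluded small cases $x_1<3$ or $y_1<2$ are deferred to Subsection \ref{rc} as the paper indicates.
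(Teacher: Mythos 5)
Your proposal matches the paper's own argument in Subsection \ref{ss2} essentially step for step: the check that $12/25<3/4$ under $x_1\ge3$, $y_1\ge2$, the application of Lemma \ref{l5} to obtain \eqref{gamma4}, the LLL lower bound $4.5\cdot10^{-56}<|\Gamma_4|$ using the coefficient bounds from Proposition \ref{th1}, and the final logarithm computation. Your added remarks on the nonvanishing of $\Gamma_4$ and the choice of scaling constant in the lattice are sensible but do not change the route.
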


\subsection{The third bounds}\label{ss3}

Recall (\ref{e2}) and (\ref{e3}), respectively, in accordance with the two cases of Subsection  \ref{ss1}.

{\it Case 1.} ($3^{x_1}<5^{y_1}$.) Since $3/5^{y_1}<3/4$, by Lemma \ref{l5} we obtain
$$
\left|\frac{1}{\lambda}x_2-(y_1+y_2)+\frac{\log(3^{x_1}-1)}{\log 5}\right|<\frac{6}{5^{y_1}\log 5}<3.8\cdot 5^{-y_1},
$$
where $x_2<3\cdot 10^{51}$, and $3\le x_1<82.9\cdot\lambda<118.96$. Apply the Baker-Davenport type reduction method described in Lemma \ref{BD} together with the parameters $M=3\cdot 10^{51}$, $A=3.8$, $B=5$, $m=x_2$, $\kappa=1/\lambda$, $n=y_1+y_2$, $\mu=\log(3^{x_1}-1)/\log(5)$ with $3\le x_1\le 118$, $4\nmid x_1$ (87 cases). Note that 
$$Q_{113}=49979470671933915311803624529695074923111987539096229\approx5\cdot10^{52}$$ 
is the first denominator exceeding $6M$. For the possible values of $x_1$, in all cases we found $y_1\le82$.

{\it Case 2.} ($3^{x_1}>5^{y_1}$.) If $x_1\ge2$, then $3/3^{x_1}<3/4$, and Lemma \ref{l5} admits
$$
\left|\lambda y_2-(x_1+x_2)+\frac{\log(5^{y_1}-1)}{\log 3}\right|<\frac{6}{3^{x_1}\log 3}<5.5\cdot 3^{-x_1}.
$$
Here $y_2<2.1\cdot 10^{51}$, and $2\le y_1\le81$, $y_1$ is odd (40 possibility for $y_1$). Similarly to Case 1, we use Lemma \ref{BD}, which leads to $x_1\le115$.

We summarize the last computational results as follows.
\begin{prop}\label{th3}
Assume again that 
$x_1\ge3$, $y_1\ge2$. Then from equation (\ref{main}) we conclude
\begin{equation*}\label{ma}
x_1\le 118\qquad{\rm and}\qquad y_1\le81.
\end{equation*}
\end{prop}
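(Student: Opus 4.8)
The plan is to dichotomize on the sign of $3^{x_1}-5^{y_1}$ (the two powers cannot be equal, since $3^{x_1}$ and $5^{y_1}$ are coprime integers exceeding $1$), exactly as in the separation of Cases~1 and~2 in Subsection~\ref{ss1}. In each case Proposition~\ref{th2} pins one of the two small exponents to a short explicit range; the corresponding intermediate estimate — inequality (\ref{e2}) in the first case, (\ref{e3}) in the second — is then turned, via Lemma~\ref{l5}, into a three-term linear inequality in an integer multiple of $1/\lambda$ (resp.\ $\lambda$), an integer, and a fixed irrational shift depending only on the frozen exponent. That is precisely the input required by the Baker--Davenport reduction of Lemma~\ref{BD}, whose upper bound $M$ for the integer multiplier is furnished by Proposition~\ref{th1}.

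\emph{First I would treat the case $3^{x_1}<5^{y_1}$.} Here $\min\{x_1/\lambda,y_1\}=x_1/\lambda$, so Proposition~\ref{th2} gives $x_1/\lambda\le 81.2$, hence $x_1\le 118$; by Lemma~\ref{l4} also $4\nmid x_1$, which leaves $87$ admissible values of $x_1$. From $|\Gamma_{2,1}|<3\cdot 5^{-y_1}<3/4$ and Lemma~\ref{l5} applied to $x=\log(3^{x_1}-1)+x_2\log 3-(y_1+y_2)\log 5$, division by $\log 5$ yields
\[
\left|\frac{x_2}{\lambda}-(y_1+y_2)+\frac{\log(3^{x_1}-1)}{\log 5}\right|<3.8\cdot 5^{-y_1},
\]
the linear form being nonzero because $(3^{x_1}-1)3^{x_2}=5^{y_1+y_2}$ is impossible modulo $3$. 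I would then apply Lemma~\ref{BD} with $\kappa=1/\lambda$, $\mu=\log(3^{x_1}-1)/\log 5$, $A=3.8$, $B=5$, $m=x_2$, $n=y_1+y_2$, $k=y_1$, and $M=3\cdot 10^{51}$: take the first convergent $P/Q$ of $1/\lambda$ with $Q>6M$ and, for each of the $87$ values of $x_1$, check that $\xi=\|\mu Q\|-M\|\kappa Q\|>0$. Lemma~\ref{BD} then excludes $y_1\ge\log(AQ/\xi)/\log B$, and evaluating the right-hand side in every case gives $y_1\le 82$, hence $y_1\le 81$ since $y_1$ is odd.

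\emph{The case $3^{x_1}>5^{y_1}$ is symmetric.} Now $\min\{x_1/\lambda,y_1\}=y_1$, so Proposition~\ref{th2} gives $y_1\le 81$ at once, and with $y_1$ odd only the $40$ values $y_1\in\{3,5,\dots,81\}$ survive. From $|\Gamma_{2,2}|<3\cdot 3^{-x_1}<3/4$ and Lemma~\ref{l5} one obtains
\[
\left|\lambda y_2-(x_1+x_2)+\frac{\log(5^{y_1}-1)}{\log 3}\right|<5.5\cdot 3^{-x_1},
\]
again nonzero since $(5^{y_1}-1)5^{y_2}=3^{x_1+x_2}$ is impossible, to which I would apply Lemma~\ref{BD} with $\kappa=\lambda$, $\mu=\log(5^{y_1}-1)/\log 3$, $A=5.5$, $B=3$, $m=y_2$, $k=x_1$, and $M=2.1\cdot 10^{51}$. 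Running the reduction over the $40$ values of $y_1$ yields $x_1\le 115$. Combining the two cases, $x_1\le 118$ and $y_1\le 81$, as claimed.

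The real obstacle is computational rather than conceptual: one must carry out the reduction step of Lemma~\ref{BD} separately for each of the roughly $87+40$ frozen exponents, which entails expanding $1/\lambda$ and $\lambda$ as continued fractions far enough to reach a denominator beyond $6M\approx 2\cdot 10^{52}$ — in practice the $113$th convergent — and then confirming $\xi>0$ in every single instance. The positivity of $\xi$ is the one point that could genuinely fail; if it did for some value of $x_1$ or $y_1$, one would have to pass to a later convergent or dispose of that residual case by an ad hoc argument. Once $\xi>0$ is verified throughout, the bounds $x_1\le 118$ and $y_1\le 81$ follow mechanically.
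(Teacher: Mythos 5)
Your proposal follows the paper's own proof essentially verbatim: the same case split on the sign of $3^{x_1}-5^{y_1}$, the same use of Proposition~\ref{th2} to pin down one of the two small exponents, and the same Baker--Davenport reduction via Lemma~\ref{BD} with identical parameters ($A=3.8$, $B=5$ in Case~1 and $A=5.5$, $B=3$ in Case~2, $87$ resp.\ $40$ frozen values, the $113$th convergent), arriving at the same intermediate bounds $y_1\le 82$ and $x_1\le 115$. The only addition is your explicit verification that the linear forms are nonzero, which the paper leaves implicit.
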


\subsection{The final bounds, and verification}\label{ss4}

In (\ref{e4}), and (\ref{e5}) we found that $|\Gamma_{3,1}|<3/5^{y_2}$, and $|\Gamma_{3,2}|<3/3^{x_2}$, respectively. Supposing $y_2\ge2$, and $x_2\ge3$, it is obvious that both $|\Gamma_{3,i}|<4/5$, subsequently we can apply Lemma \ref{l5}. It gives
\begin{equation}\label{v1}
\left|(\log 3)x_2-(\log 5)y_2+\log\left(\frac{3^{x_1}-1}{5^{y_1}-1}\right)\right|<\frac{6}{5^{y_2}}
\end{equation}
in Case 1. Knowing $x_2<3\cdot 10^{51}$, $y_2<2.1\cdot 10^{51}$ we use the LLL algorithm for each possible pair $(x_1,y_1)$ with the bounds given in Proposition \ref{th3}. To reduce the time of the calculations we also exploit that the conditions $4\nmid x_1$, $2\nmid y_1$, and $3^{x_1}<5^{y_2}$ (Case 1) also hold. The procedure yields lower bound $K_{x_1,y_1}$ for the left-hand side of (\ref{v1}) in each case, and comparing it with $6/5^{y_2}$ we obtain an upper bound on $y_2$. The maximum of the upper bounds is 159. Finally,
$$
3^{x_2}<1+\frac{5^{y_1+y_2}}{3^{x_1}-1}\le 1+\frac{5^{81+159}}{3^{3}-1}
$$
gives $x_2\le 348$. The divisibility condition reduces it to $x_2\le 347$.

A similar treatment works for 
\begin{equation*}\label{v2}
\left|(\log 5)y_2-(\log 3)x_2+\log\left(\frac{5^{y_1}-1}{3^{x_1}-1}\right)\right|<\frac{6}{3^{x_2}}
\end{equation*}
in Case 2. It returns with $x_2\le235$, and then $y_2\le 238$. Thus $y_2\le 237$.
\begin{prop}\label{th4}
Assume again that 
$x_1\ge3$, $y_1\ge2$. Equation (\ref{main}) can hold only if
\begin{equation*}\label{maa}
x_2\le 347\qquad{\rm and}\qquad y_2\le237.
\end{equation*}
\end{prop}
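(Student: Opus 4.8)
The plan is to push the same chain of reductions one more step: having already bounded $x_1$ and $y_1$ by Proposition~\ref{th3}, we now isolate the larger exponents $x_2,y_2$ via the linear forms $\Gamma_{3,1}$ and $\Gamma_{3,2}$ from \eqref{e4} and \eqref{e5}. The point is that in the logarithmic inequalities \eqref{v1} (Case~1) and its Case~2 analogue, the constant term $\log\bigl((3^{x_1}-1)/(5^{y_1}-1)\bigr)$ is now a \emph{fixed} algebraic number once the pair $(x_1,y_1)$ is chosen from the finitely many admissible possibilities, so each such inequality is of the shape treated by the LLL reduction (or, equivalently, by Lemma~\ref{BD}).

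First I would record that, since we are in the regime $x_1\ge 3$, $y_1\ge 2$, and we may additionally assume $x_2\ge 3$, $y_2\ge 2$ (the small cases being absorbed into the later verification in Subsection~\ref{rc}), both $|\Gamma_{3,1}|$ and $|\Gamma_{3,2}|$ are below $3/25<3/4$, so Lemma~\ref{l5} applies and yields \eqref{v1} together with its mirror inequality. Next, for each of the finitely many pairs $(x_1,y_1)$ with $1\le x_1\le 118$, $4\nmid x_1$, $1\le y_1\le 81$, $2\nmid y_1$ — further pruned by the case condition $3^{x_1}<5^{y_1}$ in Case~1 (resp.\ $3^{x_1}>5^{y_1}$ in Case~2) — I would run the LLL algorithm with the input bounds $x_2<3\cdot10^{51}$, $y_2<2.1\cdot10^{51}$ from Proposition~\ref{th1}. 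This produces a lower bound $K_{x_1,y_1}$ for the left side of \eqref{v1}; comparing with $6/5^{y_2}$ gives $y_2<(\log(6/K_{x_1,y_1}))/\log 5$ in each case, and taking the maximum over all pairs gives the uniform bound $y_2\le 159$ in Case~1. Once $y_2$ is bounded, the companion exponent $x_2$ is controlled by elementary means: from \eqref{main} we have $3^{x_2}<1+5^{y_1+y_2}/(3^{x_1}-1)$, and plugging in the worst case $y_1\le 81$, $y_2\le 159$, $x_1\ge 3$ yields $x_2\le 348$, which the divisibility constraint $4\nmid x_2$ sharpens to $x_2\le 347$. The symmetric computation for Case~2 gives $x_2\le 235$ first, then $y_2\le 238$, hence $y_2\le 237$ after using $2\nmid y_2$; combining the two cases proves the proposition.

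The main obstacle is purely computational rather than conceptual: one must run the LLL reduction for every admissible pair $(x_1,y_1)$ — a few thousand cases after the parity and case-condition sieving — and the lattice dimension here is genuinely three (two unknowns $x_2,y_2$ plus the inhomogeneous term), so the reduced bounds on $y_2$, while small, need to be extracted carefully so that the subsequent elementary estimate for $x_2$ is valid uniformly. A secondary subtlety is bookkeeping the case split cleanly, since the roles of $3$ and $5$ (and hence of the $x$'s and $y$'s) swap between Case~1 and Case~2, so one must be sure that the ``worst case'' substituted into $3^{x_2}<1+5^{y_1+y_2}/(3^{x_1}-1)$ (and its analogue) really is extremal over the allowed ranges. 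Once these are handled, the claimed bounds $x_2\le 347$, $y_2\le 237$ follow, reducing Theorem~\ref{t} to a finite — though still sizeable — verification.
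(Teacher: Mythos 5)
Your proposal matches the paper's own argument essentially step for step: apply Lemma \ref{l5} to $\Gamma_{3,1}$ and $\Gamma_{3,2}$, run the LLL reduction on \eqref{v1} and its mirror for each admissible pair $(x_1,y_1)$ under the bounds of Propositions \ref{th1} and \ref{th3}, obtain $y_2\le 159$ in Case~1 and $x_2\le 235$ in Case~2, and finish with the elementary estimate $3^{x_2}<1+5^{y_1+y_2}/(3^{x_1}-1)$ (and its analogue) plus the parity/divisibility constraints to reach $x_2\le 347$ and $y_2\le 237$. The approach and the numerical outcome coincide with the paper's, so no further comment is needed.
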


Now a simple computer search shows that there is no solution to equation (\ref{main}) if
$3\le x_1\le118$, $2\le y_1\le 81$, $x_1\le x_2\le 347$, and $y_1\le y_2\le 237$.

\subsection{The remaining cases}\label{rc}

This subsection is devoted to handle separately the cases $x_1=2$, $x_1=1$, and $y_1=1$. 

{\it Case $x_1=2$.} Now equation (\ref{main}) has the form
\begin{equation}\label{rceq1}
8\cdot(3^{x_2}-1)-(5^{y_1}-1)(5^{y_2}-1)=0,
\end{equation}
where the exponents $x_2$, $y_1$, and $y_2$ are positive integers, $x_2\ge2$. Let $m=3^2\cdot 7\cdot 13=819$. Observe that the Carmichael function has the small value $\lambda(m)=12$. Since
$3^x\equiv 3^{x+6}\;(\bmod\,m)$ holds if $x\ge2$, and $5^{12}\equiv 1\;(\bmod\,m)$ fulfils, we checked the possibilities $2\le x_2\le 7$, $1\le y_1,y_2\le12$ for the left-hand side of (\ref{rceq1}) modulo $m$,
and it never gave 0. Hence there is no solution to (\ref{main}) with $x_1=2$.

{\it Case $x_1=1$.} Equation (\ref{main}) gives 
\begin{equation}\label{rceq2}
2\cdot(3^{x_2}-1)=(5^{y_1}-1)(5^{y_2}-1),
\end{equation}
the exponents $x_2$, $y_1$, and $y_2$ are positive integers. 
Since $4\le y_1+y_2$, by Lemma \ref{l3} we may assume $x_2\ge y_1+y_2$.

What follows is very similar to the treatment of the Subsections \ref{ss1} and \ref{ss2} therefore here we give less details. From (\ref{rceq2}) we have
\begin{equation}\label{u1}
\left|\Gamma_6\right|:=\left|\frac{2\cdot3^{x_2}}{5^{y_1+y_2}}-1\right|=\left|\frac{3-5^{y_1}-5^{y_2}}{5^{y_1+y_2}}\right|<
\frac{2\cdot 5^{y_2}}{5^{y_1+y_2}}=\frac{2}{5^{y_1}}.
\end{equation}
Apply again Lemma \ref{Matveev}, clearly with $D=\max\{x_2,y_1+y_2\}=x_2$. It provides immediately
$\log\left|\Gamma_6\right|>-1.755\cdot10^{11}(1+\log x_2)$, which together with (\ref{u1}) entails
\begin{equation}\label{y11}
y_1<1.1\cdot10^{11}(1+\log x_2)=:K_1(x_2).
\end{equation}
From equation (\ref{rceq2}) we can also conclude
\begin{equation}\label{u2}
0<\Gamma_7:=\frac{(5^{y_1}-1)5^{y_2}}{2\cdot3^{x_2}}-1=\frac{5^{y_1}-3}{2\cdot3^{x_2}}<
\frac{5^{y_1}}{2\cdot3^{x_2}}<\frac{5}{5^{y_2}}.
\end{equation}
The last inequality is a consequence of Lemma \ref{l1} and (\ref{rceq2}):
$$
5^{y_1+y_2-1}<(5^{y_1}-1)(5^{y_2}-1)=2\cdot(3^{x_2}-1)<2\cdot3^{x_2}.
$$
 Using the theorem of Matveev (Lemma \ref{Matveev}) for $\Gamma_7$, it returns with $\log\Gamma_7>-3.459\cdot10^{24}(1+\log x_2)^2$. Hence, by  
(\ref{u2})
$$
y_2<2.16\cdot10^{24}(1+\log x_2)^2=:K_2(x_2)
$$
follows. Now
$$
\frac{\log3}{\log5}\,x_2=\frac{\log3}{\log5}(x_1+x_2-1)<y_1+y_2<K_1(x_2)+K_2(x_2)
$$
leads to the absolute bound
\begin{equation*}\label{y12}
x_2<1.4\cdot10^{28}.
\end{equation*}

Clearly, (\ref{u1}) implies $\left|\Gamma_6\right|<3/4$. Thus Lemma \ref{l5} yields
\begin{equation*}\label{LLLu1}
\left|x_2\log3-(y_1+y_2)\log5+\log2\right|<\frac{4}{5^{y_1}}.
\end{equation*}
The application of the LLL-algorithm with the bound $y_1+y_2<x_2<1.4\cdot10^{28}$ leads to  
$$
y_1\le93=:K_1^\star.
$$
Now we repeat the treatment from (\ref{y11}), replacing $K_1(x_2)$ by $K_1^\star$. Lemma \ref{Matveev} provides
$$
y_2<3.4\cdot10^{14}(1+\log x_2)=:K_2^\star(x_2).
$$
Henceforward
$$
\frac{\log3}{\log5}\,x_2<K_1^\star+K_2^\star(x_2),
$$
and then 
\begin{equation*}\label{y13}
y_2<x_2<1.92\cdot10^{16}.
\end{equation*}

The last step of this specific case is to exploit (\ref{u2}). Clearly, $5/5^{y_2}<3/4$, subsequently
\begin{equation*}\label{BDu1}
\left|y_2\frac{\log5}{\log3}-x_2+\frac{\log((5^{y_1}-1)/2)}{\log3}\right|<\frac{10}{5^{y_2}\log3}<\frac{10}{5^{y_2}}.
\end{equation*}
We used the Baked-Davenport type reduction method (Lemma \ref{BD}) for all the possible cases
$y_1=3,5,\dots,93$ (46 values) and found $y_2\le29$. Thus $y_1\le29$, and a verification of (\ref{rceq2}) with finitely many integers on its right-hand side gives no solution to (\ref{rceq2}).

{\it Case $y_1=1$.} Now equation (\ref{e1}) returns with 
\begin{equation}\label{rceq3}
(3^{x_1}-1)(3^{x_2}-1)=4\cdot(5^{y_2}-1).
\end{equation}
A complete analogue of the treatment of the {\it Case $x_1=1$} can be applied to solve (\ref{rceq3}). Here we omit the details, and declare again that no solution exists unless $y_1=y_2=1$.

\bigskip

{\bf Acknowledgements.} The authors thank to the referee and F.~Luca for the valuable remarks. We are also grateful to Cs. Bert\'ok for ana\-lyzing the problem with the method described in \cite{BH}.

\end{document}